\DeclareFontFamily{OT1}{rsfs}{}
\DeclareFontShape{OT1}{rsfs}{n}{it}{<-> rsfs10}{}
\DeclareMathAlphabet{\mathscr}{OT1}{rsfs}{n}{it}
\DeclareFontFamily{U}{wncy}{}
    \DeclareFontShape{U}{wncy}{m}{n}{<->wncyr10}{}
    \DeclareSymbolFont{mcy}{U}{wncy}{m}{n}
    \DeclareMathSymbol{\Sh}{\mathord}{mcy}{"58}
\begin{document}
\theoremstyle{plain}

\newtheorem{theorem}{Theorem}[section]
\newtheorem{thm}[equation]{Theorem}
\newtheorem{prop}[equation]{Proposition}
\newtheorem{cor}[equation]{Corollary}
\newtheorem{conj}[equation]{Conjecture}
\newtheorem{lemma}[equation]{Lemma}
\newtheorem{definition}[equation]{Definition}
\newtheorem{question}[equation]{Question}
\theoremstyle{definition}
\newtheorem{remark}[equation]{Remark}
\newtheorem{example}[equation]{Example}
\numberwithin{equation}{section}

\newcommand{\Hecke}{\mathcal{H}}
\newcommand{\Liea}{\mathfrak{a}}
\newcommand{\Cmg}{C_{\mathrm{mg}}}
\newcommand{\Cinftyumg}{C^{\infty}_{\mathrm{umg}}}
\newcommand{\Cfd}{C_{\mathrm{fd}}}
\newcommand{\Cinftyfd}{C^{\infty}_{\mathrm{ufd}}}
\newcommand{\sspace}{\Gamma \backslash G}
\newcommand{\Sym}{{\rm Sym}}

\newcommand{\PP}{\mathcal{P}}
\newcommand{\bfP}{\mathbf{P}}
\newcommand{\bfQ}{\mathbf{Q}}
\newcommand{\Siegel}{\mathfrak{S}}
\newcommand{\g}{\mathfrak{g}}
\newcommand{\A}{{\mathbb A}}
\newcommand{\B}{{\rm B}}
\newcommand{\Q}{\mathbb{Q}}
\newcommand{\Gm}{\mathbb{G}_m}
\newcommand{\kk}{\mathfrak{k}}
\newcommand{\nn}{\mathfrak{n}}
\newcommand{\tF}{\tilde{F}}
\newcommand{\p}{\mathfrak{p}}
\newcommand{\m}{\mathfrak{m}}
\newcommand{\bb}{\mathfrak{b}}
\newcommand{\Ad}{{\rm Ad}\,}
\newcommand{\ttt}{\mathfrak{t}}
\newcommand{\frakt}{\mathfrak{t}}
\newcommand{\U}{\mathcal{U}}
\newcommand{\Z}{\mathbb{Z}}
\newcommand{\G}{\mathbb{G}}
\newcommand{\bfG}{\mathbf{G}}
\newcommand{\bfT}{\mathbf{T}}
\newcommand{\R}{\mathbb{R}}
\newcommand{\ST}{\mathbb{S}}
\newcommand{\h}{\mathfrak{h}}
\newcommand{\bC}{\mathbb{C}}
\newcommand{\C}{\mathbb{C}}
\newcommand{\E}{\mathbb{E}}
\newcommand{\F}{\mathbb{F}}
\newcommand{\N}{\mathbb{N}}
\newcommand{\qH}{\mathbb {H}}
\newcommand{\temp}{{\rm temp}}
\newcommand{\Hom}{{\rm Hom}}
\newcommand{\Aut}{{\rm Aut}}
\newcommand{\Ext}{{\rm Ext}}
\newcommand{\Nm}{{\rm Nm}}
\newcommand{\End}{{\rm End}\,}
\newcommand{\Ind}{{\rm Ind}\,}
\def\circG{{\,^\circ G}}
\def\M{{\rm M}}
\def\diag{{\rm diag}}
\def\Ad{{\rm Ad}}

\def\H{{\rm H}}
\def\SL{{\rm SL}}
\def\PSL{{\rm PSL}}
\def\GSp{{\rm GSp}}
\def\PGSp{{\rm PGSp}}
\def\Sp{{\rm Sp}}
\def\St{{\rm St}}
\def\GU{{\rm GU}}
\def\SU{{\rm SU}}
\def\U{{\rm U}}
\def\GO{{\rm GO}}
\def\GL{{\rm GL}}
\def\PGL{{\rm PGL}}
\def\GSO{{\rm GSO}}
\def\Gal{{\rm Gal}}
\def\SO{{\rm SO}}
\def\O{{\rm  O}}
\def\sym{{\rm sym}}
\def\St{{\rm St}}
\def\tr{{\rm tr\,}}
\def\ad{{\rm ad\, }}
\def\Ad{{\rm Ad\, }}
\def\rank{{\rm rank\,}}

    \title[Relating Tate-Shafarevich group with class group]
      {Relating Tate-Shafarevich group of an elliptic curve \\with class group}

      \author{Dipendra Prasad and Sudhanshu Shekhar}

\address{D.P.: Indian Institute of Technology Bombay, Powai, Mumbai-400076} 
\address{D.P.:  Tata Institute of Fundamental
Research, Colaba, Mumbai-400005.}
\email{prasad.dipendra@gmail.com}

\address{S.S.: Indian Institute of Technology, Kanpur.}
\email{sshekhars2012@gmail.com}
\maketitle
{\hfill \today}

\newcommand{\surj}{\twoheadrightarrow}
\newcommand{\inj}{\hookrightarrow}
\newcommand{\lrta}{\longrightarrow}
\newcommand{\WH}{\widehat}

\newcommand{\QQ}{\mathbb{Q}}

\newcommand{\ZZ}{\mathbb{Z}}
\newcommand{\FF}{\mathbb{F}}

\newcommand{\tor}{\mathrm{Tor}}
\newcommand{\Cl}{\mathrm{Cl}}
\newcommand{\Sel}{\mathrm{Sel}}
\newcommand{\im}{\mathrm{im}}
\newcommand{\Ker}{\mathrm{Ker}}
\newcommand{\ras}{\mathrm{res}}
\newcommand{\ET}{\mathrm{E}}
\newcommand{\rk}{\mathrm{rank}}
\newcommand{\sel}{\mathrm{Sel}}
\newcommand{\gal}{\mathrm{Gal}}
\newcommand{\cyc}{\mathrm{cyc}}
\newcommand{\coker}{\mathrm{Coker}}
\newcommand{\Img}{\mathrm{Im}}
\newcommand{\corank}{\mathrm{corank}}
\newcommand{\Aa}{\mathrm{A}}
\newcommand{\TT}{\bold{T}}

\newcommand{\crd}{\color{red}}

\newcommand{\ilim}{\varinjlim}
\newcommand{\plim}{\varprojlim}

\begin{abstract} The paper formulates a precise relationship between the Tate-Shafarevich group $\Sh(E)$ of an elliptic curve $E$ over $\Q$ with a quotient of the classgroup of $\Q(E[p])$ on which   $\Gal(\Q(E[p]/\Q) = \GL_2(\Z/p)$ operates by its standard
  2 dimensional representation over $\Z/p$. We establish such a relationship in most cases when $E$ has good reduction at $p$. 
\end{abstract}
\tableofcontents

\section{Introduction}
 Let $E$ be an elliptic curve over ${\Bbb Q}$, $p$ an odd prime number, 
and  
$\rho :  \Gal( \bar{\Q} / {\Q} )\longrightarrow  \GL_2 ({\Bbb F}_p)$ 
the associated
 Galois representation on elements of order $p$ on $E$. Assume that 
the image of the  Galois 
representation is all of  $\GL_2({\Bbb F}_p)$. The representation $\rho$ then gives 
rise to an extension $K$ of ${\Bbb Q}$ with Galois 
group $\GL_2({\Bbb F}_p)$. 
Let $\Cl_K$ denote the class group of $K$. The group $\GL_2({\Bbb F}_p)$ being the
Galois group of $K$ over ${\Bbb Q}$, operates on $\Cl_K$, hence on the 
${\Bbb F}_p$-vector space $\Cl_K/p\Cl_K$.
Write the {\it semi-simplification} of the representation 
of  $\GL_2({\Bbb F}_p)$ on $\Cl_K/p\Cl_K$ as $\sum V_\alpha$,  where 
$V_\alpha$'s are the various irreducible 
representations of $\GL_2({\Bbb F}_p)$ in characteristic $p$. It 
is a well-known fact that any irreducible
representation of $\GL_2({\Bbb F}_p)$ in characteristic $p$ is of the form 
$V_{i,j}=
{ \Sym}^i\otimes {\rm det}^j, \ 0\leq i \leq p-1, \ 0 \leq j\leq p-2$ where $\Sym^i$ refers to the 
$i$-th symmetric power of the standard 2 dimensional representation of
$\GL_2({\Bbb F}_p)$, and $\det$ denotes the determinant character 
of $\GL_2({\Bbb F}_p)$.

It is a natural question to understand which $V_{i,j}$'s appear in $\Cl_K/p\Cl_K$.
The aim of this note is to formulate some questions in this direction 
which can be viewed as a $\GL_2$ analogue of the famous theorem of Herbrand-Ribet, 
\cite{Ri1}, and \cite{Was} for an exposition. One is hoping for a 
conjectural answer along the lines of Herbrand-Ribet to say that the representation 
$\Sym^i(\F_p+\F_p) \otimes \det^j$ of $\GL_2(\F_p)$ appears in $\Cl_K/p\Cl_K$
if and only if the `algebraic part' of the first nonzero derivative of  $L(s, \Sym^i(E) \otimes \det^j)$ at 0 
is divisible by $p$ (in some favorable situations such as when $E$ has good reduction at $p$).

The authors have not seen any computation of the $\GL_2(\F_p)$ representation on $\Cl_K/p\Cl_K$ into a sum of irreducible 
pieces (after semi-simplification) where 
$K = \Q(E[p])$ is a Galois extension of $\Q$, say
 with $\Gal(K/\Q) = \GL_2({\Bbb F}_p)$. Presumably it is not beyond present computational powers to do 
such a computation say for $p=5$; this would be very useful data to have for the problems discussed in this paper.
There are 
examples known due to K. Rubin and A. Silverberg \cite{RS} of families of elliptic curves with the same $\Q(E[5])$ with Galois group
$\GL_2(\F_5)$. Are there  elliptic curves of the same rank, say = 0, in this family, for which
the $5$-valuation of the algebraic part of $L(E,1)$ as in the Birch-Swinnerton-Dyer  conjecture are different? If our suggestions
in this paper are correct then this should not happen!

In an unfinished manuscript of the first author \cite{P2}, a heuristic relating representations of $\GL_2(\F_p)$ on $H_K/pH_K$ and divisibility of certain $L$-values was given based on factorisation of the class number formula for the Dedekind zeta
function $\zeta_K(s)$: 
$$\zeta_K(s) = -\frac{hR}{w}s^{r_1+r_2-1}+ 
{\rm
  ~~higher~order~terms...},$$
in terms of the complex representation theory of $\GL_2(\F_p)$ on the left hand side of the class number formula, and in terms of mod $p$ representation theory on the right hand side of the classnumber formula involving $H_K/pH_K$. We will not detail the heuristic considerations made in \cite{P2} except to say a few words on congruences of automorphic representations and of their $L$-values.

We first
fix some notation. We will fix an isomorphism of $\bar{\Bbb Q}_p$ 
with $\C$ where $\bar{\Bbb Q}_p$ is
 a fixed algebraic closure of ${\Bbb Q}_p$, the field
of $p$-adic numbers. This allows one to define $\p$, a prime ideal in  $\bar{\Bbb Z}$,  the integral closure of
${\Bbb Z}$ in ${\Bbb C}$, over the prime ideal generated by $p$ in $\Z$.

One  defines congruence 
modulo a  prime $\mathfrak{p}$ in $\bar{\Bbb Q}$, 
of  automorphic representations of $G(\A)$ for $G$ a reductive 
algebraic group   over ${\Bbb Q}$  most simply through
congruence of 
Hecke eigenvalues (i.e., of the algebra ${\mathcal H}(G(\Z_p)\backslash G(\Q_p)/G(\Z_p), \Z)$) 
at places of ${\Bbb Q}$ where $G$ is unramified, 
i.e., 
is quasi-split and splits over an unramified extension; for the notion of congruences of automorphic
representations, it is best
to demand congruence of Hecke eigenvalues only at almost all places of ${\Bbb Q}$ where $G$ is unramified, 
omitting an unspecified finite set of places of ${\Bbb Q}$ including those where the group is ramified.
(To be able to talk of congruences presupposes algebraicity of Hecke eignevalues.) Observe that transferring representations on a group $G_1(\A)$ to $G_2(\A)$ via a map of the L-groups ${}^LG_1(\C) \rightarrow {}^LG_2(\C)$ which carries congruent automorphic
representations of $G_1(\A)$ to  congruent automorphic representations of $G_2(\A)$, allows one to talk
of congruences of automorphic representations on two very different groups by embedding their L-groups in a (suitable)
common L-group!

For $\GL_2(F)$, where $F$ is a totally real number field, 
congruences for holomorphic elliptic 
modular forms are usually defined using  Fourier expansions. For the symplectic similitude group $\GSp_{2n}(F)$, and the symplecic group $\Sp_{2n}(F)$, $F$ a totally real number field, there is still the notion of Fourier
expansion, and one can hope that at least for these groups, algebraicity (resp. $\p$-integrality) of Hecke eignevalues is equivalent to
algebraicity (resp. $\p$-integrality) of Fourier coefficients (up to a scaling) of an appropriate `newform', 
and that further, congruences too can be read off from Fourier expansion. The recent work of  Furusawa and Morimoto \cite{FM} building on 
earlier work of Dickson, Pitale, Saha and Schmidt on `B\"ocherer conjecture' for $\GSp_4(\A_\Q)$ 
relating Fourier coefficients to central $L$-values (which themselves are supposed to 
have rationality and congruence properties)
lends support to such an expectation, although many others have advocated a contrary expectation, see for example, end of page 251 of \cite{Ha}.  Modular forms of half integral weight for $\GL_2(F)$, where $F$ is a totally real number field, have very similar general features in that their Fourier coefficients too are related to central $L$-values (by the work of Waldspurger), presumably rationality and  integrality up to a scalar are the same as that of the Hecke-operators.

If there was a ($p$-adic) Galois representation associated to the automorphic
representations under consideration, one could call two automorphic representations congruent if the associated Galois
representations with values in $\GL_n(\overline{{\Bbb Q}}_p)$ 
were isomorphic 
modulo $p$ (after semi-simplification).

We next briefly talk about $L$-values. According to the conjectures of Bloch and Kato \cite{BK}, the highest nonzero derivative
of a motivic $L$-function at any integral point can be written 
canonically as a product of an algebraic part, and a transcendental part, allowing one to talk of 
primes of  $\overline{{\Bbb Q}}$ dividing an $L$-value, which would be
a short form for talking of  primes of  $\overline{{\Bbb Q}}$ dividing the
algebraic part of the  (highest nonzero derivatives of) the $L$-function at an integer.
We recall, as is standard in the subject, that when speaking
of $L$-functions of automorphic representations which are congruent modulo $\mathfrak{p}$, 
the Euler factors above $p$ are to be
removed.
In some cases, we could be lucky to be
dealing with an $L$-value which is itself algebraic. 
This is for instance the case 
for Herbrand-Ribet's theorem \cite{Ri1}, 
and \cite{P} for a more general context involving $L$-values of finite Artin representations cutting out CM number fields.
(Perhaps, highest nonzero derivative of  an irreducible $L$-function  at an integer point is never algebraic unless
one is dealing with these CM Artin representations!)
In this paper, we will be comparing an Artin L-function with one of an Elliptic curve which by the Birch-Swinnerton-Dyer
conjecture has a well-formulated algebraic part $L(1,E)^{\rm alg}$.

The  questions formulated in \cite{P2} 
are predicated on the following, general but somewhat vague
principle,  which seems to lie at the basis
of being able to define $p$-adic $L$-functions:
 if two automorphic representations  are congruent modulo $\mathfrak{p}$, then the algebraic
 parts of their $L_S$-values (where $L_S$ denotes partial $L$-function, removing Euler factors at $S$ which
 is any finite set of places of $\Q$  containing  $p, \infty$ and any prime where
 either of the two representations is ramified)  at any integral point  are both $\p$-integral,
 or neither are $\p$-integral, 
and if $p$-integral, they are congruent modulo $\mathfrak{p}$.

\section{Main question}
The following question from \cite{P2} is at the basis of this paper.

\begin{question}\label{ques}
Let $E$ be an elliptic curve over ${\Bbb Q}$ such that $E(\Q)=0$.
Let $K = \Q(E[p])$ be the Galois extension of $\Q$ obtained by attaching elements of order $p$ 
on $E$ where $p$ is an odd prime.
We assume that $\Gal(K/\Q) = \GL_2({\Bbb F}_p)$, 
and also that $p$ is coprime to $c_\ell = [E(\Q_\ell):E(\Q_\ell)^0]$, the so-called Tamagawa factors,  for all 
finite primes $\ell$.
Let $\Cl_K$ denote the class group of $K$ which comes equipped with a natural action of  $\Gal(K/\Q)= \GL_2({\Bbb F}_p)$.
Then if $p | |\Sh(E)(\Q)|$, is it true that 
the $\GL_2(\F_p)$ representation $\Cl_K/p\Cl_K$ 
contains  the standard
2-dimensional representation of $\GL_2(\F_p)$ as a quotient? What about the converse?
\end{question}

There are two natural appoaches to attack this question: one which is what we will discuss in greater
detail from next section, using Selmer groups, and the machinery of Galois cohomology available to deal with
it. The other approach,  as in the pioneering work of Ribet \cite{Ri1} is  by looking at the congruence of 
cusp forms  with an Eisenstein series. The method of Ribet has two basic steps:

\begin{enumerate}
\item finding 
  cusp forms on $\GL_2(\A_\Q)$ which are congruent to a given Eisenstein series if the constant term of the Eisenstein series
  is zero mod $p$,
\item proving that the associated mod $p$ representation of the cusp form serves the purpose of constructing everywhere
  unramified extension of $\Q(\mu_p)$ of the desired kind.
  \end{enumerate}

Neither of the two steps is understood for $\GSp(4)$ (perhaps not even for $\GL_2(\A_\Q)$ in some generality?), except that there is a recent work due to Bergstr\"om and Dummigan, \cite{BD} where they formulate some general questions along these lines.
Note that 
 there are two conjugacy classes
of maximal parabolic subgroups in $\GSp(4)$ and both could be sources of such congruences,
and therefore could have applications to Galois representations. We make very brief comment on what the method of Ribet
will give if the two steps above were accomplished.

We first take up the case when the Eisenstein series is one on Klingen parabolic associated to a cusp
form (on the Levi subgroup of the Klingen parabolic)
so that the associated Galois representation lands inside the dual parabolic which is the Siegel
parabolic. In this case, a congruence between Eisenstein series and a cusp form will be a source of
$\Sym^2(\F_p + \F_p) \otimes \det^j$ appearing in $\Cl_K/p\Cl_K$.

Next we consider an Eisenstein series supported on a Siegel parabolic (so that the associated Galois representation lands 
inside the Levi of the Klingen parabolic). The Klingen parabolic $P$  in
$\GSp(4)$ looks like 
$$ 1 \rightarrow N \rightarrow P \rightarrow \GL(2) \times {\bf G}_m 
\rightarrow 1,$$
with N a non-abelian unipotent group of dimension 3 which is 
the 3-dimensional Heisenberg group, and thus has a centre of dimension 1. 
Dividing $N$ by the center one gets 2 dimensional representation of $\GL(2)$
which thus seems ideally suited to give rise to extensions of
$K$ on which the Galois group of $K$, i.e., $\GL_2({\Bbb F}_p)$, operates by a 
twist of the 2 dimensional standard representation. 

The specific question thus
is that if $p$ divides (the algebraic part of) $L(1,\pi \otimes \omega^j)$, there is a cusp form on $\GSp(4)$ which is congruent to an 
Eisenstein series.  

For a specific example of such a congruence, see the paper of Harder \cite{Ha} where for the unique elliptic modular form $f$ for $\SL_2(\Z)$ of weight 22,  he conjectured existence (which was later confirmed)
of certain cuspidal eigenform on $\GSp(4,\A_\Q)$ with the following congruence for Hecke eigenvalues, 
which we just write down from his paper without further explanations:
$$\lambda(p) \equiv p^8 + a_f(p) + p^{13} \bmod 41, \quad {\rm ~~for~~ all~~ primes~~} p,$$
except to note that 
$\frac{\Lambda(f,14)}{\Omega_+}$ is an integer, and the crucial reason for the prime 41 is:
$$41 | \frac{\Lambda(f,14)}{\Omega_+}.$$

In the context of Harder's example,
an optimistic hope will be that the mod $41$ Galois representation associated to the eigenform $f$ of weight 22
for $\SL_2(\Z)$ cuts out a $\GL_2(\F_{41})$ extension $K_f$ of $\Q$ which has an unramified abelian extension
on which $\GL_2(\F_{41})$ acts by $ \Sym^1 \otimes \det ^{-8}$ (or, is it $ \Sym^1 \otimes \det ^{-13}$?).

\section{Class group and Tate-Shafarevich group}\label{intro}
Rest of this paper deals exclusively with constructing the standard 2-dimensional representation of $\GL_2(\F_p)$
on the class group $\Cl_K/p\Cl_K$ using Selmer group of $E$, an elliptic curve defined over $\Q$.

Let $\Sel_p(E/\Q)$ be the $p$-Selmer group of $E$ over $\Q$ defined by the exact sequence
\[ 0 \lrta \Sel_p(E/\Q) \lrta H^1(\Q,E[p]) \lrta \prod_{v}H^1(\Q_v,E(\overline{\Q}_v)), \]
where $v$ varies over primes of $\Q$. Since the restriction map $H^1(\Q,E[p]) \lrta H^1(\Q_v,E(\overline{\Q}_v))$ factors through $H^1(\Q,E[p]) \lrta H^1(\Q_v,E[p])$, we also have the following exact sequence
\[ 0 \lrta \Sel_p(E/\Q) \lrta H^1(\Q,E[p]) \lrta \prod_{v}H^1(\Q_v,E[p])/{\rm Im}(\kappa_v), \]
where  \[ \kappa_v : E(\Q_v)/ pE(\Q_v) \lrta H^1(\Q_v, E[p])\]
is associated to the multiplication by $p$ map  (called the {\it Kummer map  } of $E$) on the Galois cohomology groups 
\[ 0 \lrta E[p] \lrta E \stackrel{p}{\lrta} E  \lrta 0. \] 
Let 
 \[ \kappa_v^{ur} : E(\Q_v^{ur})/ pE(\Q_v^{ur}) \lrta H^1(\Q_v^{ur}, E[p])\]
 be the Kummer map of $E$ over the maximal unramified extension $\Q_v^{ur}$ of $\QQ_v$.
 
For every prime $v$ of $\QQ$, we get  restriction maps 
\[ \Sel_p(E/\Q) \stackrel{\ras_v}{\lrta} \Img (\kappa_v)\]
and 
\[ \Sel_p(E/\Q) \stackrel{\ras_v^{ur}}{\lrta} \Img(\kappa_v^{ur})\]
For every prime $v\neq p$  of $\Q$, let $c_v(E)$ denote the Tamagawa number of $E$ over $\Q$. Under the assumption that $c_v$ is $p$-adic unit for every $v\neq p$, we shall show that $\ras_v^{ur}$ is the zero map. In particular, this implies that elements of $\Sel_p(E/\Q)$ are unramified outside $p$. Further, if $\ZZ/p\ZZ$-rank of $\Sel_p(E/\Q)$ is at least two then we shall
show that the kernel of $\ras_p^{ur}$ is non-trivial. Thus we get elements in $\Sel_p(E/\Q)$ which is unramified everywhere allowing us to construct  quotients of $\Cl_K/p\Cl_K$ isomorphic to $E[p]$ as Gal$(K/\Q) = \GL_2(\Z/p)$-modules.

Suppose that $E$ has good ordinary reduction at $p$. 
Then $\tilde{E}_p[p]$ is a rank one  $\mathbb{F}_p$-vector space, and is the maximal unramified quotient of $E[p]$ considered as a
$G_p = \Gal(\bar{\Q}_p/\Q_p)$-module.
Put, $a_p(E):= p+1-\tilde{E}_p(\mathbb{F}_p)$.  Since, $E$ has good ordinary reduction at $p$, $a_p(E)$ is $p$-adic unit and therefore  the polynomial $X^2-a_p(E)X+p$ has a unique $p$-adic unit root. Let  $\alpha_p$ be such a root.   Then, we have  $a_p(E)\equiv \alpha_p$ mod $p$. The Frobenius at $p$ acts  on $\tilde{E}_p[p]$ via multiplication
by $\alpha_p\equiv a_p(E)$ mod $p$. Let $C_p$ be the kernel of the natural quotient map $E[p] \lrta \tilde{E}_p[p]$. Then, $G_p$ acts on $C_p$ via the character $\omega_p\psi^{-1}$ where $\omega_p$ is the Teichmuller character at $p$.

Put $K=\Q(E[p])$.  Let \[\bar\rho : \gal(\overline{\QQ}/\QQ) \lrta \GL_2(\mathbb{F}_p) \] be the Galois representation given by the action of $\gal(K/\QQ)$ on $E[p]$. If $E$ has good ordinary reduction at $p$, the restriction of $\bar\rho$ to $G_p$ is equivalent to  the form 
\begin{equation}\label{rep}
  \left( {\begin{array}{cc}
   \omega_p\psi^{-1} & \star \\
   0 & \psi \\
  \end{array} } \right).
\end{equation}

Now suppose that $E[p]$ is an irreducible $\gal(\overline{\Q}/\Q)$-module. We shall need the following well-known lemma whose
proof shall be omitted.

\begin{lemma}\label{vanish}
  Let $G$ be a group operating on a  vector space $V$ over a field $F$.
  If $G$ has a central element which acts on $V$ by multiplication by an element of $F$ not equal to $1$,
  then  $H^i(G,V)=0$ for all $i\geq 0$.
\end{lemma}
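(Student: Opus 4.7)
The plan is to exploit the well-known principle that inner automorphisms act trivially on group cohomology. Let $z \in G$ be the central element in the hypothesis, and let $\lambda \in F$ with $\lambda \neq 1$ denote the scalar by which $z$ acts on $V$.

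First, recall the general functoriality statement: given a group $G$, a $G$-module $V$, and a group automorphism $\sigma : G \to G$ together with an $F$-linear automorphism $\tau : V \to V$ satisfying $\tau(g \cdot v) = \sigma(g) \cdot \tau(v)$ for all $g \in G$, $v \in V$, the pair $(\sigma, \tau)$ induces an automorphism of $H^i(G, V)$ for every $i \geq 0$. The classical fact I would invoke is that whenever $\sigma = c_g$ is an inner automorphism, namely conjugation by some $g \in G$, and $\tau = \ell_g$ is the multiplication-by-$g$ action on $V$, then this induced automorphism of $H^i(G, V)$ is the identity. This is standard and is proved, for example, by a direct chain-level computation on the bar resolution.

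Next, I would apply this with $g = z$. Because $z$ is central, the conjugation map $c_z$ equals the identity automorphism of $G$. Thus the pair $(\mathrm{id}_G, \ell_z)$ induces the identity on $H^i(G, V)$. On the other hand, because $\sigma = \mathrm{id}_G$, the induced map is simply the one obtained by functoriality in the coefficients from the $G$-module homomorphism $\ell_z : V \to V$; but $\ell_z$ is multiplication by the scalar $\lambda$, and this scalar passes through the functor $H^i(G, -)$ to give multiplication by $\lambda$ on $H^i(G, V)$.

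Combining the two descriptions, multiplication by $\lambda$ and the identity agree on $H^i(G, V)$, so multiplication by $(\lambda - 1)$ is the zero endomorphism. Since $\lambda - 1 \in F^\times$ is invertible, this forces $H^i(G, V) = 0$, as desired. There is no real obstacle here: the only point one has to be careful about is the precise invocation that $(c_z, \ell_z)$ induces the identity on cohomology, which is a well-known and elementary lemma (and is indeed why the statement is being cited without proof in the paper).
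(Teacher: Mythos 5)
Your argument is correct and complete: the classical fact that the pair $(c_z,\ell_z)$ induces the identity on $H^i(G,V)$, combined with centrality of $z$, forces multiplication by $\lambda-1\in F^\times$ to annihilate $H^i(G,V)$, which gives the vanishing. The paper explicitly omits the proof of this lemma as well known, and the argument you give is precisely the standard one the authors are relying on, so there is nothing to compare beyond noting that your invocation of the inner-automorphism lemma is the right (and essentially only) ingredient.
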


As a consequence of the above Lemma we have the following (see \cite{LW}).

\begin{lemma}\label{vanish1}
Let $G$ be a finite subgroup of $\GL_2(\F_p)$ operating irreducibly on  $E[p]= \F_p + \F_p$,  then $H^i(G,E[p]) =0$ for all $i$.
\end{lemma}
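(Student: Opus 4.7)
The plan is a short case analysis that either invokes Lemma \ref{vanish} directly or reduces to Maschke's theorem. Let $Z := G \cap \F_p^{*} \cdot I$, the subgroup of scalar matrices contained in $G$. Since scalars are central in $\GL_2(\F_p)$, the subgroup $Z$ is central in $G$. If $Z \neq \{I\}$, pick any $\lambda I \in Z$ with $\lambda \neq 1$. Then $\lambda I$ is a central element of $G$ acting on $E[p]$ as multiplication by $\lambda \neq 1$, and Lemma \ref{vanish} immediately yields $H^i(G, E[p]) = 0$ for every $i \geq 0$.

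Suppose instead that $Z = \{I\}$. I will show that in this case $p \nmid |G|$. Otherwise $G$ would contain an element $u$ of order $p$, which is necessarily unipotent and fixes a unique line $L \subset E[p]$. By the irreducibility of the $G$-action, some $g \in G$ must satisfy $gL \neq L$, so $u' := g u g^{-1}$ is a unipotent element fixing a line different from $L$. The subgroup $\langle u, u' \rangle \subseteq \SL_2(\F_p)$ then contains two distinct Sylow $p$-subgroups of $\SL_2(\F_p)$, and by Dickson's classification of subgroups of $\SL_2(\F_p)$ this forces $\langle u, u' \rangle = \SL_2(\F_p)$. In particular $-I \in G$, contradicting $Z = \{I\}$ since $p$ is odd.

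Hence $|G|$ is coprime to $p$. Maschke's theorem then gives $H^i(G, E[p]) = 0$ for all $i \geq 1$, while irreducibility of the (necessarily non-trivial) action on the two-dimensional space $E[p]$ forces $E[p]^G = 0 = H^0(G, E[p])$. The only non-trivial step is the appeal to Dickson's classification; if one wishes to avoid it, the same conclusion follows from a direct Sylow count, since a proper subgroup of $\SL_2(\F_p)$ containing more than one Sylow $p$-subgroup cannot lie in a Borel (which has a unique common fixed line) and the remaining possibilities — dihedrals and the exceptional groups $A_4, S_4, A_5$ — all have order coprime to $p$.
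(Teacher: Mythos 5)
Your proof is correct and follows essentially the same route as the paper's: in both arguments the key point is that if $p$ divides $|G|$ then irreducibility forces $G$ to contain two order-$p$ unipotent subgroups fixing distinct lines, hence all of $\SL_2(\F_p)$ and in particular the central element $-I$, so that Lemma \ref{vanish} applies, while the case $p\nmid |G|$ is disposed of by coprimality of the order. The only differences are cosmetic: you split the cases by whether $G$ contains a nontrivial scalar rather than by whether $p$ divides $|G|$, you invoke Dickson's classification where the paper simply uses that the upper and lower unipotent subgroups generate $\SL_2(\F_p)$, and you are a bit more careful than the paper in observing that $H^0$ vanishes by irreducibility (coprimality alone does not kill $H^0$).
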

\begin{proof}
  First suppose that  $p\nmid |G|$.  Then $H^i(G,E[p])=0$ for  all $i$ as $|G|$ annihilates $H^i(G,E[p])$.
  Now suppose that $G$ contains a non-trivial $p$-Sylow subgroup.  If a $p$-Sylow subgroup of $G$ is normal in $G$,
  then, up to  conjugacy, $G$ is contained in  the group of upper triangular matrices. 
  In particular, the action of $G$ on $E[p]$  is reducible, contrary to our assumption.
  Therefore a $p$-Sylow subgroup cannot be  normal in $G$, so there are at least 2
  distinct $p$-Sylow subgroups in $G$ which can be assumed to be the group of upper and lower triangular unipotent matrices in
  $\SL_2(\mathbb{F}_p)$. Since $\SL_2(\mathbb{F}_p)$ is generated by the group of upper and lower triangular unipotent matrices
    we get that $\SL_2(\mathbb{F}_p)$ is contained in $G$. In particular, $-I$ is also contained in $G$. Therefore the action of  $Z(G)$ on $E[p]$ is non-trivial. Lemma \ref{vanish} therefore  proves this lemma.  
\end{proof}

As a consequence of Lemma 1 if $E[p]$ is an irreducible representation of
$\Gal(\Q(E[p])/\Q)$, the restriction map
\[ H^1(\Q,E[p])\stackrel{\ras_K}{\lrta} H^1(K,E[p])\] 
is injective. Let $f\in H^1(\Q,E[p])$ be a non-trivial element. Then $\ras_K(f) : \gal(\overline{K}/K) \lrta E[p]$ is  a non-trivial homomorphism. Let $K_f$ be the field defined by the kernel of  $\ras_K(f)$. 
\begin{theorem}\label{main}
Suppose that the following holds:
\begin{enumerate}
\item[(a)] $E$ has good reduction at $p$. 
\item[(b)]  If $E$ has ordinary reduction at $p$,  $a_p(E) = 1$ mod $p$ and $E$ has no CM then  $\bar\rho$ is wildly ramified at $p$. 
\item[(c)] $c_v(E)$ is a $p$-adic unit for every finite prime $v\neq p$. 
\item[(d)] $E[p]$ is an irreducible $\gal(\overline{\Q}/\Q)$-module. 
\end{enumerate}
Then, $\rank_{\mathbb{F}_p}(\Ker(\ras_p^{ur}))\geq \rank_{\mathbb{F}_p}(\Sel_p(E/\Q)) -1$. Furthermore,  $\ras_K$ induces an injective homomorphism  \[ \ras_K : \Ker(\ras_p^{ur}) \lrta \Hom_{\gal(K/\Q)}(\Cl_K,E[p]) \subset H^1(K,E[p]).\]
\end{theorem}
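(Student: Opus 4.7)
The plan is to prove the two assertions in turn. For the rank inequality, I would first show that hypothesis (c) implies $\ras_v^{\mathrm{ur}} = 0$ for every finite $v \neq p$. At $v \neq p$ of good reduction, $E(\Q_v^{\mathrm{ur}})$ is $p$-divisible (the formal group is prime-to-$p$ divisible since $v \nmid p$, and $\tilde{E}(\bar{\F}_v)$ is divisible as any abelian variety over an algebraically closed field), so $\Img(\kappa_v^{\mathrm{ur}}) = 0$. At $v \neq p$ of bad reduction, the image of $E(\Q_v)/pE(\Q_v)$ in $E(\Q_v^{\mathrm{ur}})/pE(\Q_v^{\mathrm{ur}})$ factors through the component group $\pi_0(\F_v)$ of order $c_v$, which by hypothesis (c) is coprime to $p$; since multiplication by $p$ is then surjective on $\pi_0(\F_v)$, this image vanishes. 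Consequently every Selmer class is unramified at each $v \neq p$.

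Next I would show $\rank_{\F_p} \ras_p^{\mathrm{ur}}(\Sel_p(E/\Q)) \leq 1$, which immediately yields the stated inequality. This image factors through the Kummer composition $E(\Q_p)/pE(\Q_p) \to E(\Q_p^{\mathrm{ur}})/pE(\Q_p^{\mathrm{ur}}) \hookrightarrow H^1(\Q_p^{\mathrm{ur}}, E[p])$, so it suffices to bound $\dim_{\F_p} E(\Q_p)/pE(\Q_p) \leq 1$. Applying the snake lemma to the reduction short exact sequence $0 \to \hat{E}(p\Z_p) \to E(\Q_p) \to \tilde{E}_p(\F_p) \to 0$ together with multiplication by $p$, the argument splits into cases. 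In the supersingular case and the ordinary non-anomalous case ($a_p \not\equiv 1 \pmod p$), the groups $\tilde{E}_p[p](\F_p)$ and $\tilde{E}_p(\F_p)/p$ vanish, leaving $E(\Q_p)/pE(\Q_p) \cong \hat{E}(p\Z_p)/p \cong \F_p$. In the ordinary anomalous case both these groups equal $\F_p$, and $E(\Q_p)/pE(\Q_p)$ is one-dimensional precisely when the connecting map $\tilde{E}_p[p](\F_p) \to \hat{E}(p\Z_p)/p$ is nonzero, equivalently when the $G_p$-module extension $0 \to C_p \to E[p] \to \tilde{E}_p[p] \to 0$ does not split. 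Hypothesis (b) provides this: wild ramification of $\bar\rho$ at $p$ forces the extension to be non-split.

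For the second assertion, take $f \in \Ker(\ras_p^{\mathrm{ur}})$, viewed in $H^1(\Q, E[p])$. By the first paragraph, $f$ restricts trivially to every inertia group $I_v$ for $v \neq p$, and by definition $f$ restricts trivially to $I_p = \gal(\bar\Q_p/\Q_p^{\mathrm{ur}})$. Since $K/\Q$ is Galois, inertia at any prime of $K$ above $v$ is contained in $I_v$, so the restriction $\ras_K(f) \in H^1(K, E[p]) = \Hom(\gal(\bar\Q/K), E[p])$ is unramified at every finite prime of $K$. It therefore factors through the Galois group of the maximal everywhere unramified abelian $p$-extension of $K$, which by class field theory is $\Cl_K/p\Cl_K$. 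Because $\ras_K(f)$ lies in $H^1(K, E[p])^{\gal(K/\Q)}$ by inflation-restriction, the induced homomorphism $\Cl_K \to E[p]$ is automatically $\gal(K/\Q)$-equivariant. Injectivity of $\ras_K$ on all of $H^1(\Q, E[p])$, and in particular on $\Ker(\ras_p^{\mathrm{ur}})$, follows from Hochschild-Serre together with Lemma \ref{vanish1}, which ensures $H^1(\gal(K/\Q), E[p]) = 0$.

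The main technical obstacle is the ordinary anomalous case in the second paragraph: the connecting-map analysis rests delicately on the non-splitting of the local $G_p$-module extension of $E[p]$, which is precisely what assumption (b) is designed to guarantee by asserting wild ramification of $\bar\rho|_{G_p}$; without this hypothesis the split-extension subcase would leave $E(\Q_p)/pE(\Q_p)$ two-dimensional and the desired rank bound would fail.
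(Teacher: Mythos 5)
Your overall strategy coincides with the paper's: hypothesis (c) makes Selmer classes unramified outside $p$, a local computation at $p$ bounds the rank of the image of $\ras_p^{ur}$ by $1$, and everywhere-unramified classes restrict injectively (via Lemma \ref{vanish1} and inflation--restriction) to $\Gal(K/\Q)$-equivariant homomorphisms $\Cl_K \to E[p]$. Your first and third paragraphs are correct and agree in substance with the paper's proof.

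The gap is in your second paragraph, in the ordinary anomalous case. You invoke hypothesis (b) to get wild ramification, hence non-splitting of $0 \to C_p \to E[p] \to \tilde{E}_p[p] \to 0$ as $G_p$-modules, hence $E(\Q_p)[p]=0$ and $\dim_{\F_p} E(\Q_p)/pE(\Q_p) = 1$. But (b) asserts wild ramification only when $E$ has \emph{no} CM. The case where $E$ has CM, good ordinary reduction at $p$, and $a_p(E) \equiv 1 \bmod p$ is compatible with hypothesis (d) (the mod $p$ image of a CM curve can be irreducible, e.g.\ inside the normalizer of a Cartan), and there the local extension can split, giving $E(\Q_p) \cong \Z_p \oplus \Z/p\Z$ and a two-dimensional $E(\Q_p)/pE(\Q_p)$ --- precisely the situation your closing paragraph concedes would break your bound. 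The theorem is still true in that case, but by a mechanism your reduction to ``$\dim_{\F_p} E(\Q_p)/pE(\Q_p) \leq 1$'' cannot detect: the paper shows that for CM curves the relevant dual sequence splits after tensoring with $\Q$, so $H^0(\Q_p^{ur}, E[p^\infty]) \cong \Q_p/\Z_p$ and the order-$p$ point of $E(\Q_p)$ becomes $p$-divisible in $E(\Q_p^{ur})$; hence $\phi : E(\Q_p)/pE(\Q_p) \to E(\Q_p^{ur})/pE(\Q_p^{ur})$ is not injective, and since $\ras_p^{ur}$ factors through $\phi$, its image still has rank at most $1$. You need to supply this (or an equivalent) argument for the CM anomalous case; as written your proof only covers the hypotheses' non-CM branch.
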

\begin{proof}
Let $f\in \Sel_p(E/\Q)$. First we shall show that $f$ is unramified outside $p$. Let $v\neq p$ be a finite prime of $\Q$. Consider the commutative diagram 
 {\small
\begin{equation}
\xymatrix{
0 \ar[r] & E(\Q_v)/pE(\Q_v) \ar[d]^{\lambda} \ar[r]^{\kappa_v} & H^1(\Q_v,E[p])  \ar[d]^{\mu} \\ 
0 \ar[r] & E(\Q_v^{ur})/pE(\Q_v^{ur}) \ar[r]^{\kappa_v^{ur}} &  H^1(\Q_v^{ur},E[p]) .    
}
\end{equation}
}
 To show that $f$ is unramified at $v$ it is enough to show that  $f$ is $p$ divisible in $E(\Q_v^{ur})$. This follows from the assumption that  $v$ is coprime to $p$ and $c_v(E)$ is a $p$-adic unit;
 see for example,
 \cite[Lemma 2]{BLR} and  \cite[Lemma 3.4, Lemma 3.6]{AS}. (Basically, this result follows from the fact that   $E(\Q_v^{ur})$
 can be treated as an extension of $\Z_v^{ur}$ by  one of the groups $\G_a(\bar{\F}_v), \G_m(\bar{\F}_v), E(\bar{\F}_v)$ depending on the reduction of $E$ mod $v$, and a finite group of order $c_v(E)$. Observe too, that this crucial fact
 $E(\Q_v^{ur})/pE(\Q_v^{ur})=0$ say at places of good reduction uses projectivity of the group variety $E$, and in particular,
 it does not hold for $\G_m$.) 

 If $v$ is an infinite place  then since $p$ is odd we again get that $f$ is unramified. 

If $f\in \Ker(\ras_p^{ur})$, then $f$ is unramified at $p$ and therefore it is unramified at all primes of $\Q$.
In this case, $\ras_K(f)$ is a Gal$(K/\Q)$-equivariant
homomorphism from  $\gal(\bar{K}/K) \lrta E[p]$ 
which is unramified at all prime of $K$ and which by Corollary \ref{vanish1}
induces an injective homomorphism from 
\[ \Ker(\ras_p^{ur}) \lrta \Hom_{\gal(K/\Q)}(\Cl_K,E[p]).\]

Next we show that $\Img(\ras_p^{ur})$ has $\mathbb{F}_p$-rank at most $1$, and hence
$\rank_{\mathbb{F}_p}(\Ker(\ras_p^{ur}))\geq \rank_{\mathbb{F}_p}(\Sel_p(E/\Q)) -1$ which will prove the theorem.

From the structure theory of rational points of an elliptic curve over a local field,
we have that $E(\Q_p)\cong \ZZ_p \oplus E(\Q_p)(torsion)$. If $a_p(E)\neq 1$ mod $p$ or $\rho$ is wildely ramified then we shall show that $E(\Q_p)[p]=0$. This implies that $\Img(\kappa_p)$ has has $\mathbb{F}_p$-rank  $1$. Since $\Img(\ras_p)\subset \Img(\kappa_p)$ and $\ras_p^{ur}$ factors through $\ras_p$,
proving that the $\Img(\ras_p^{ur})$ has $\mathbb{F}_p$ rank at most $1$.

First we consider the case when $E$ has good ordinary reduction at $p$ and $a_p(E) \neq 1$ mod $p$. 
From the assumption  that  $a_p(E) \neq 1$ mod $p$,  we have  $H^0(\Q_p,\tilde{E}_p[p])=0$. On the other hand since $\omega_p$ is ramified at $p$ and $\psi$ is unramified, $G_p$ acts on $C_p$ non-trivially. Thus $H^0(\Q_p,C_p)=0$. This shows that $H^0(\Q_p, E[p])=E(\Q_p)[p]=0$.  
If  $\star$ in equation (1) above is non-trivial, then since $\omega_p$ is non-trivial, we again see
that $H^0(\Q_p, E[p])=E(\Q_p)[p]=0$. 

In the supersingular  case, it is well-known 
that $E(\Q_p)[p]=0$ (see for example, \cite{E}, \cite[Proposition 8.7]{K}).
This again implies that $\rank_{\mathbb{F}_p}(\Img{\kappa_p^{ur})}=1$. 

Finally we consider the case when  $E$ has CM and $a_p(E)=1$ mod $p$.  This implies that in representation \eqref{rep}, $\psi$ is the trivial character.  For elliptic curves with CM we donot need to assume that $\rho$ is wildly ramified, even if $a_p(E)=1$
mod $p$ in which case we have $H^0(\QQ_p,\tilde{E}_p[p^\infty]) \cong \ZZ/p\ZZ$.

Since $E$ has good ordinary reduction at $p$, we have an  exact sequence of $G_p$-modules (see \cite[Section 2]{Gr}),
\begin{equation}\label{seq}
 0 \lrta E^+[p^\infty] \lrta  E[p^\infty] \lrta \tilde{E}_p[p^\infty] \lrta 0.  
 \end{equation}
Both groups,  $\tilde{E}_p[p^\infty] $ and $E^+[p^\infty] $ are isomorphic to $\QQ_p/\ZZ_p$.   The $G_p$-module $\tilde{E}_p[p^\infty] $ is unramified and the inertia group $I_p$ at $p$ acts on $E^+[p^\infty]$ via the $p$-th cyclotomic character $\chi_p$. Further, since $E$ has CM, the corresponding dual exact sequence is  split exact sequence when tensored with $\Q$, from which we deduce that
$H^0(\QQ_p^{ur},\tilde{E}[p^\infty])\cong \QQ_p/\ZZ_p \subset H^0(\QQ_p^{ur},E[p^\infty])$.  From the structure theorem
of submodules of $\Q_p/\Z_p+\Q_p/\Z_p$ and using the fact that $\Z/p\Z + \Z/p\Z$ cannot be contained in $H^0(\QQ_p^{ur},E[p^\infty])$,
it follows that $ H^0(\QQ_p^{ur},E[p^\infty]) \cong \Q_p/\Z_p$.
Therefore, a non-zero element in $H^0(\QQ_p,E[p^\infty])\cong \ZZ/p\ZZ$ is not $p$-divisible, but it is $p$-divisible in $H^0(\QQ_p^{ur},E[p^\infty])$. Thus we get an element in $E(\QQ_p)$ which is non $p$-divisible, but $p$-divisible in $E(\QQ_p^{ur})$. In particular, the natural map
\begin{equation}\label{base}
\phi : E(\QQ_p)/pE(\QQ_p) \lrta E(\QQ_p^{ur})/pE(\QQ_p^{ur})  
 \end{equation}
 is not injective. Since $H^0(\QQ_p,E[p^\infty])\cong \ZZ/p\ZZ$, we get that $E(\QQ_p)\cong \ZZ_p\oplus \ZZ/p\ZZ$. Therefore, $\mathbb{F}_p$-rank of $E(\QQ_p)/pE(\QQ_p)$ is two. Since $\ras_p^{ur}$ factors through $\phi$ and $\kappa_p^{ur}$ and $\phi$ is not injective, we get that the $\Img(\ras_p^{ur})$ has $\mathbb{F}_p$-rank $\leq 1$. Hence, the claim follows.
\end{proof}

\begin{cor}
  Under the hypothesis of the theorem on the elliptic curve $E$ over $\Q$, if either
  rank$_{\F_p}(\Sh(E) [p]) > 1 $, or  $\Sh(E) [p] \not = 0 $, and $\Sh(E)[p^{\infty}] < \infty$, 
  then there exists an
  unramified abelian extension of $\Q(E[p])$ with Galois group $E[p]$, Galois over $\Q$,  on which
 $\Gal(\Q(E[p])/\Q) \subset \GL_2(\Z/p)$ operates by the standard
  2 dimensional representation  of $ \GL_2(\Z/p)$ over $\Z/p$. If $\Sh(E) [p] = 0$ but then if Mordell-Weil rank of $E$ over $\Q$ is $\geq 2$,
  then also there exists such an
  unramified abelian extension of $\Q(E[p])$ with Galois group $E[p]$.
\end{cor}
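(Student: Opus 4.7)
The plan is to reduce each of the three hypotheses in the corollary to the single condition $\dim_{\F_p} \Sel_p(E/\Q) \geq 2$ and then invoke Theorem \ref{main} directly. I start with the fundamental short exact sequence
$$0 \lrta E(\Q)/pE(\Q) \lrta \Sel_p(E/\Q) \lrta \Sh(E)[p] \lrta 0,$$
which gives $\dim_{\F_p} \Sel_p(E/\Q) = \dim_{\F_p} E(\Q)/pE(\Q) + \dim_{\F_p} \Sh(E)[p]$.

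Case by case: if $\rank_{\F_p} \Sh(E)[p] > 1$, then trivially $\dim_{\F_p} \Sel_p(E/\Q) \geq 2$. If $\Sh(E)[p] \neq 0$ with $\Sh(E)[p^\infty]$ finite, I appeal to the Cassels--Tate pairing, which is a non-degenerate alternating pairing on the finite group $\Sh(E)[p^\infty]$; since $p$ is odd, alternating forces $\dim_{\F_p} \Sh(E)[p]$ to be even, so $\Sh(E)[p] \neq 0$ upgrades to $\dim_{\F_p} \Sh(E)[p] \geq 2$. Finally, if $\Sh(E)[p] = 0$ but the Mordell-Weil rank of $E/\Q$ is at least $2$, then the Mordell-Weil theorem immediately gives $\dim_{\F_p} E(\Q)/pE(\Q) \geq 2$ (and the Selmer rank follows).

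Having established $\dim_{\F_p} \Sel_p(E/\Q) \geq 2$ in each case, Theorem \ref{main} ensures that $\Ker(\ras_p^{ur})$ has $\F_p$-rank at least $1$. Picking any non-zero element $f$ and applying the second assertion of the theorem produces a non-zero $\Gal(K/\Q)$-equivariant homomorphism $\varphi : \Cl_K \to E[p]$. Because $\varphi$ is killed by $p$, it factors through $\Cl_K/p\Cl_K$; because hypothesis (d) of Theorem \ref{main} makes $E[p]$ an irreducible $\Gal(K/\Q)$-module, the image of $\varphi$ must be all of $E[p]$, so $\varphi$ is surjective.

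To finish, I use class field theory: the kernel of $\varphi$ cuts out an unramified abelian extension $L/K$ with $\Gal(L/K) \cong E[p]$. The $\Gal(K/\Q)$-equivariance of $\varphi$ forces $L$ to be Galois over $\Q$, and identifies the induced $\Gal(K/\Q)$-action on $\Gal(L/K)$ with the standard $2$-dimensional representation of $\GL_2(\F_p)$ on $E[p]$. The only input beyond Theorem \ref{main} is the squareness of $|\Sh(E)[p^\infty]|$ coming from Cassels--Tate; this is standard and I do not anticipate any obstacle in executing the plan.
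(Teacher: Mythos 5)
Your proposal is correct and follows essentially the same route as the paper: reduce each hypothesis to $\dim_{\F_p}\Sel_p(E/\Q)\geq 2$ (using the squareness of $|\Sh(E)[p^\infty]|$ from the Cassels--Tate pairing for the second case), apply Theorem \ref{main} to get a nonzero element of $\Ker(\ras_p^{ur})$, and use irreducibility of $E[p]$ plus class field theory to produce the unramified extension. The paper's own proof is just a terser version of this, leaving the Selmer/Mordell--Weil/$\Sh$ exact sequence and the Cassels--Tate input implicit.
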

\begin{proof} Observe that if $\Sh(E)[p^{\infty}] < \infty$, then $\Sh(E) [p]$ has order which is a square, in particular,
  if $\Sh(E) [p]$ is nonzero, its rank over $\Z/p$ is at least 2. Note also that under the hypothesis of the
  Theorem \ref{main},
  the action of  $\Gal(\Q(E[p])/\Q) \subset \GL_2(\Z/p)$ on $E[p]$ is irreducible, and hence  any non-trivial homomorphism
  $\ras_K(f) : \gal(\overline{K}/K) \lrta E[p]$ which is $\Gal(K/\Q)$-equivariant must be surjective, constructing an
  unramified extension of $K$ with Galois group $E[p]$.
\end{proof}

\begin{remark} In Theorem \ref{main}, all our analysis is done with the Selmer group, not distiguishing the part of it
  coming from the Mordell-Weil group, and the part coming from $\Sh$. In fact there have been papers constructing unramified
  extensions  of $\Q(E[p])$ using  the Mordell-Weil group,
  see e.g. \cite{SY} and \cite{TH}, which give conclusions of the
 form that if the Mordell-Weil group is sufficiently large compared to number of places of $\Q$ where $E$ has bad reduction,
then the classgroup     of $\Q(E[p])$ is nonzero. It may be remarked that dealing with extensions  of $\Q(E[p])$ using  the Mordell-Weil group is easier in the sense that these extensions come from extensions of $\Q$.
\end{remark}

\noindent
{\bf Example 1} Let $E$ be the elliptic curve defined by the equation 
\[  y^2 + x y = x^{3} -  x^{2} - 332311 x - 73733731  .\]
The Cremona level of $E$ is $1058d1$. Then it follows from \cite{LMFDB} that $E[5]$ is an irreducible representation with full image of Galois representation equal to $\GL_2(\mathbb{F}_5)$. Further the Tamagawa number is equal to $1$ at primes of bad reduction $2$ and $23$. Again from \cite{LMFDB}, we have that ${\Sh}(E/\Q)[5]$ has $\mathbb{F}_5$-rank two and $a_p(E)=-2\neq 1$ mod $5$. As a consequence we get that the ideal class group $\Cl_K$ for $K=\Q(E[5])$ has a quotient isomorphic to $E[5]$ as $\gal(K/\Q)$-module. 

We mention that to produce a quotient of $\Cl_K/p\Cl_K$ isomorphic to $E[p]$, it is not necessary to assume that ${\Sh}(E/\Q)[p]$ is non-trivial. The elliptic curve, 
\[ F :  y^2 + x y + y = x^{3} + 2  \] 
given by Cremona number $1058c1$ satisfy assumptions (a) to (d) of Theorem \ref{main}. In fact, $F[5]\cong E[5]$ as $\gal(\bar{\QQ}/\QQ)$-module (see \cite[Table, page 25]{CM}). The Mordell-Weil rank of $F$ is two and ${\Sh}(F/\QQ)[5]=0$. 

\section{The converse}

In this section, we address the converse part of Question \ref{ques}.  
 
 Let $R_p(E/\QQ)$ be the subgroup of $H^1(\QQ,E[p])$ defined by the exact sequence
 \[ 0 \lrta R_p(E/\QQ) \lrta H^1(\QQ,E[p]) \lrta \prod_v H^1(\QQ_v^{ur},E[p])\]
 
 \begin{lemma}\label{fin}
Suppose that the following holds
\begin{enumerate}
\item[(a)] $E$ has good  reduction at $p$. 
\item[(b)]  If $E$ has ordinary reduction at $p$ and  $a_p(E) \neq 1$ mod $p$ then $E[p]$ is wildly ramified at $p$. 
\item[(c)] $c_v(E)$ is a $p$-adic unit for every finite prime $v\neq p$.
\end{enumerate}
Then $R_p(E/\QQ)\subset \Sel_p(E/\QQ)$. 
 \end{lemma}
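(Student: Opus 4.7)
The plan is to verify the Selmer condition place-by-place: given $f \in R_p(E/\Q)$, I must show $\ras_v(f) \in \Img(\kappa_v)$ for every place $v$ of $\Q$. Via the Kummer exact sequence
\[ 0 \lrta \Img(\kappa_v) \lrta H^1(\Q_v, E[p]) \lrta H^1(\Q_v, E)[p] \lrta 0, \]
this is equivalent to showing that the image of $\ras_v(f)$ in $H^1(\Q_v, E)[p]$ vanishes. The input I have from the definition of $R_p$ is that $\ras_v(f)$ lies in the unramified subgroup $H^1_{ur}(\Q_v, E[p]) := \Ker\bigl(H^1(\Q_v, E[p]) \to H^1(\Q_v^{ur}, E[p])\bigr)$ for every $v$.

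At the archimedean place there is nothing to check, because $p$ is odd and hence $H^1(\R, E[p]) = 0$. At a finite prime $v \neq p$, I would rerun the diagram already exploited in the proof of Theorem \ref{main}:
\[ \xymatrix{H^1(\Q_v, E[p]) \ar[d] \ar[r] & H^1(\Q_v, E)[p] \ar[d] \\ H^1(\Q_v^{ur}, E[p]) \ar[r] & H^1(\Q_v^{ur}, E)[p].} \]
The hypothesis that $\ras_v(f)$ dies in the bottom-left forces its image in $H^1(\Q_v, E)[p]$ to land inside $H^1_{ur}(\Q_v, E)[p]$. Standard identification (via Lang's theorem and the Néron component group) gives $|H^1_{ur}(\Q_v, E)| = c_v(E)$, which is prime to $p$ by hypothesis (c); hence the image vanishes and $\ras_v(f) \in \Img(\kappa_v)$.

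The serious case is $v = p$, and this is where (a) and (b) enter. Here I would analyze $H^1_{ur}(\Q_p, E[p]) = H^1(\Gal(\Q_p^{ur}/\Q_p), E[p]^{I_p})$ directly. In the supersingular subcase, inertia acts on $E[p]$ via the fundamental characters of level two, so $E[p]^{I_p} = 0$ and $H^1_{ur}(\Q_p, E[p]) = 0$ outright. In the ordinary subcase I would use the filtration \eqref{rep}: the subquotient $C_p$ satisfies $C_p^{I_p} = 0$ because $\omega_p\psi^{-1}|_{I_p} = \omega_p|_{I_p}$ is nontrivial. The potentially unramified quotient $\tilde{E}_p[p]$ on which Frobenius acts by $\alpha_p \equiv a_p(E)$ is the source of difficulty; using hypothesis (b) on wild ramification one forces the wild inertia $P_p$ to act non-trivially through the extension class $\star$, so any inertia fixed vector must lie in $C_p$, yielding $E[p]^{I_p} \subset C_p^{I_p} = 0$ and hence $H^1_{ur}(\Q_p, E[p]) = 0$ again. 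In every case the Selmer condition at $p$ holds automatically for $f$.

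The main obstacle is precisely the local calculation at $v = p$ in the ordinary case: when $E$ has good ordinary reduction, $E[p]$ genuinely admits an unramified quotient, and a priori $H^1_{ur}(\Q_p, E[p])$ can contain classes lying outside $\Img(\kappa_p)$. The role of hypothesis (b) is exactly to prevent the extension \eqref{rep} from splitting over inertia, collapsing $E[p]^{I_p}$ to zero and thereby killing $H^1_{ur}(\Q_p, E[p])$. The remaining steps at $v \neq p$ and at infinity are essentially the bookkeeping identifications already made in the proof of Theorem \ref{main}; the content of Lemma \ref{fin} is that under (a)--(c) the local conditions defining $R_p$ and $\Sel_p$ at $p$ are compatible in the required direction.
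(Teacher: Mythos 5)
Your overall strategy coincides with the paper's: verify the Selmer condition place by place, dispose of $v\neq p$ and $v=\infty$ using hypothesis (c) and the oddness of $p$, and reduce everything to showing that $H^1(\Gal(\Q_p^{ur}/\Q_p),E[p]^{I_p})=0$, so that the restriction $H^1(\Q_p,E[p])\to H^1(\Q_p^{ur},E[p])$ is injective and any $f\in R_p(E/\Q)$ is already trivial at $p$. Those reductions, and your treatment of the supersingular case, match the paper and are fine. The gap is in the good ordinary case at $p$: you assert that hypothesis (b) ``forces the wild inertia $P_p$ to act non-trivially'' and conclude $E[p]^{I_p}=0$ unconditionally. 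But (b) is a conditional statement linking wild ramification to the value of $a_p(E)\bmod p$; it does not assert wildness for every ordinary $E$. When $E[p]$ is tamely ramified at $p$, inertia acts through a cyclic quotient of order prime to $p$, the extension \eqref{rep} splits over $I_p$, and $E[p]^{I_p}=\tilde{E}_p[p]$ is one-dimensional and nonzero. In that case your argument produces nothing: $H^1_{ur}(\Q_p,E[p])$ cannot be killed by showing the inertia invariants vanish, because they do not.

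The paper closes exactly this case by a different mechanism, which is the one genuinely nontrivial step of the lemma and the real reason hypothesis (b) is formulated the way it is. When $\star=0$ on inertia (the tame case), (b) guarantees that $a_p(E)\not\equiv 1\bmod p$, so Frobenius acts on the one-dimensional space $E[p]^{I_p}=\tilde{E}_p[p]$ by a nontrivial character of order prime to $p$; hence $H^0(\Gal(\Q_p^{ur}/\Q_p),E[p]^{I_p})=0$, and since $\Gal(\Q_p^{ur}/\Q_p)$ is procyclic and the module finite, $H^1(\Gal(\Q_p^{ur}/\Q_p),E[p]^{I_p})$ (which is the module of Frobenius coinvariants, of the same cardinality as the invariants) vanishes as well. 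In other words, (b) is designed precisely to exclude the one bad configuration ``tame at $p$ and Frobenius acting trivially on $\tilde{E}_p[p]$''; in the surviving tame configuration the invariants are nonzero and one must kill $H^1$ of the unramified quotient by this Frobenius argument. Your proposal omits this step entirely, and since the tame ordinary case is the generic one, the proof as written does not establish the lemma.
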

 \begin{proof}
 Let $f\in R_p(E/\QQ)$.  Then $f|_{I_v}$ is zero in $H^1(\Q_v^{ur},E[p])$ and therefore the image of $f$ in $H^1(\Q_v^{ur},E)$ is also zero.  We suppose first that $v\neq p$.   Since $c_v(E)$ is a $p$-adic unit, $H^1(\Q_v,E)\lrta H^1(\Q_v^{ur},E)$ is injective and therefore the image of $f$ in $H^1(\Q_v,E)$ is zero.   Thus to show that $f\in \Sel_p(E/\Q)$, it is enough to show that the image of $f$ in $ H^1(\Q_p,E)$ is also zero, which is what we do next. 
 
 Suppose that $E$ has good and ordinary reduction at $p$. If $\star$ in representation \ref{rep} is non-zero then $E[p]^{I_p}=0$ and we get that $H^1(\Q_p^{ur}/\Q_p,E[p]^{I_p})=0$. If $\star$ is zero, from the assumption $a_p(E)\neq 1$ mod $p$, the Frobenius at $p$ acts on $E[p]^{I_p}$ by a non-trivial character of order prime to $p$. This implies that $H^0(\Q_p^{ur}/\Q_p,E[p]^{I_p})=0$. Since $\gal(\Q_p^{ur}/\Q_p)$ is procyclic, we get that $H^1(\Q_p^{ur}/\Q_p,E[p]^{I_p})=0$. Therefore $H^1(\Q_p,E[p]) \lrta H^1(\Q_p^{ur},E[p])$ is injective. This implies that $f$ is trivial in $H^1(\Q_p,E[p])$. Therefore the image of $f$ in $H^1(\Q_p,E)$ is zero. 
 
Next, consider the case when $E$ has supersingular reduction at $p$. Then,  as is well-known,  $E(F)[p]=0$ for every finite unramified extension $F$ of $\QQ_p$ (see for example \cite{E}). This implies that $E(\QQ_p^{ur})[p]=0$. Therefore the restriction map from $H^1(\QQ_p,E[p])\lrta H^1(\QQ_p^{ur},E[p])$ is injective. 
Again we have that  the image of $f$ in  $H^1(\QQ_p,E[p])$ trivial. In particular, its image in $H^1(\QQ_p,E))$ is also trivial. This proves  that $f\in \Sel_p(E/\QQ)$. 
 \end{proof}
 
 For the next lemma, we will keep the notation $K=\QQ(E[p])$,  $I_v$ the inertia subgroup of
 $\gal(\overline\QQ_v/\QQ_v)$ for a prime $v$ of $\QQ$, and for a prime $w|v$ of $K$,  $I_w^{K}$ will
 denote the corresponding inertia subgroup of $\gal(\overline{K}_w/K_w)$.

\begin{lemma}\label{nonzero}
  Let $\mathfrak{T}$ be the set of finite primes $v$ of $\Q$ not equal to $p$ such that $E$ has
  mutiplicative reduction at $v$ and $E(\Q_v^{ur})[p]$ has $\mathbb{F}_p$-rank one. In the case $p=3$,
  we also assume that $\mathfrak{T}$ contains the set of prime $v$ such that $E$ has additive
  potentially good reduction with $E(\Q_v^{ur})[p]$ has $\mathbb{F}_p$-rank one.
  Let $v\neq p$ be a finite prime of $\Q$ for which $c_v(E)$ is a $p$-adic unit.
  Then $H^1(I_v/I_w^K,E[p])$ is non-trivial for a prime $w|v$ of $K$ if and only if $v\in \mathfrak{T}$.
  Further, in this case the $\mathbb{F}_p$-rank of $H^1(I_v/I_w^K,E[p])$ is also one.
\end{lemma}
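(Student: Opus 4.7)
The plan is to do a case analysis on the reduction type of $E$ at $v$ (good, multiplicative, additive potentially good, additive potentially multiplicative), after identifying $I_v/I_w^K$ with the image of the inertia group $I_v \subset \gal(\bar\QQ_v/\QQ_v)$ inside $\gal(K/\QQ) \cong \GL_2(\F_p)$ under the representation $\bar\rho$. Two basic cohomological facts do most of the work: first, if a finite group $G$ has order prime to $p$, then $H^i(G,M) = 0$ for any $\F_p[G]$-module $M$ and $i \geq 1$; second, if $C_p$ acts on $E[p] \cong \F_p^2$ by a non-trivial unipotent matrix, a direct computation (using that the norm $N = 1 + \sigma + \cdots + \sigma^{p-1}$ vanishes in characteristic $p$) gives $H^1(C_p, E[p]) \cong \F_p$ with $E[p]^{C_p}$ one-dimensional.

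For good reduction the N\'eron--Ogg--Shafarevich criterion forces $K/\QQ$ to be unramified at $v$, so $I_v/I_w^K$ is trivial, $H^1$ vanishes, and $v \notin \mathfrak{T}$. For multiplicative reduction I would invoke the Tate curve uniformization to realize $E[p]$ as an extension $0 \to \mu_p \to E[p] \to \F_p \to 0$ (twisted by an unramified quadratic character in the non-split case), so that inertia acts unipotently with upper-right entry $v(q_E) \bmod p$. Since $c_v(E)$ equals $v(q_E)$ in the split case and lies in $\{1,2\}$ in the non-split case, the combined hypothesis that $c_v$ is a $p$-adic unit together with the rank-one condition defining $\mathfrak{T}$ is equivalent to $p \nmid v(q_E)$; in that case $I_v/I_w^K \cong C_p$ acts by a non-trivial unipotent and the core cyclic computation yields $H^1 \cong \F_p$, while otherwise the unipotent is trivial, $H^1 = 0$, $E(\QQ_v^{ur})[p]$ has rank two, and $v \notin \mathfrak{T}$.

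For additive potentially good reduction the image of $I_v$ in $\Aut(E[p])$ is a finite subgroup whose order divides $24$ (from the classification of automorphism groups of elliptic curves, equivalently from Tate's algorithm). For $p \geq 5$ this order is prime to $p$, hence $H^1 = 0$, consistent with the exclusion of additive reduction from $\mathfrak{T}$ for such $p$. For $p = 3$ the image can contain an element of order $3$, and a finer sweep through the Kodaira types shows that non-vanishing of $H^1$ corresponds exactly to the $3$-Sylow acting by a non-trivial unipotent on $E[3]$, which is in turn equivalent to $E(\QQ_v^{ur})[p]$ having rank one; the non-trivial case again gives $H^1 \cong \F_3$ by the cyclic computation. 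For additive potentially multiplicative reduction, I would fix the ramified quadratic extension $L/\QQ_v$ over which $E$ acquires multiplicative reduction and set $I' = I_v \cap \gal(\bar\QQ_v/L)$. Inflation-restriction (with $|I_v/I'| = 2$ prime to $p$) gives $H^1(I_v/I_w^K, E[p]) \cong H^1(I'/(I' \cap I_w^K), E[p])^{I_v/I'}$; the inner group is computed as in the multiplicative case, and the order-two quotient acts on it through the non-trivial quadratic-twist character, so its invariants are zero for $p$ odd. Since the N\'eron component group has $p$-adic unit order and the connected special fiber is purely additive, $E(\QQ_v^{ur})[p] = 0$, so such $v \notin \mathfrak{T}$, consistently with $H^1 = 0$.

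Collecting cases, $H^1(I_v/I_w^K, E[p])$ is non-trivial precisely when $v \in \mathfrak{T}$, and is one-dimensional in that case. The most delicate step is the $p = 3$ additive potentially good case, where one must traverse Tate's algorithm (Kodaira types $\mathrm{IV}$, $\mathrm{IV}^*$ and the wild pieces) to confirm that the rank-one condition on $E(\QQ_v^{ur})[p]$ picks out exactly those inertia images whose $3$-Sylow acts by a non-trivial unipotent on $E[3]$.
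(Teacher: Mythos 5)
Your proposal is correct and follows essentially the same route as the paper: a case analysis on the reduction type of $E$ at $v$, reducing everything to the group cohomology of the image of $I_v$ in $\GL_2(\F_p)$, with the two workhorses being vanishing for groups of order prime to $p$ and the computation $H^1(\Z/p, E[p]) \cong \F_p$ for a non-trivial unipotent action. The only divergences are cosmetic: you dispatch the additive potentially multiplicative case by inflation--restriction through the ramified quadratic extension where the paper simply notes that the ramified quadratic twist forces $E[p]^{I_v}=0$ (and then uses cyclicity of the inertia image), and your $p=3$ potentially good case defers to a sweep of Kodaira types where the paper invokes Serre--Tate together with the comparison of $|H^0|$ and $|H^1|$ for cyclic groups.
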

\begin{proof}
  First suppose that $E$ has additive reduction at $v \not = p$. If $E$ has potentially multiplicative reduction at $v$,
  there exist a ramified quadratic extension  of $\Q_v$ over which $E$ has split multiplicative reduction.  Let $\chi$ be the corresponding ramified quadratic character of the absolute Galois group of $\Q_v$. This implies
  that $E$ is a ramified quadratic twist of an elliptic curve $E'$ with split multiplicative reduction at $v$ and $E[p]\cong E'[p]\otimes \chi$ as $\gal(\overline{\Q_v}/\Q_v)$-modules.
  Recall that $\bar\rho$ denote the Galois representation associated to $E[p]$ and let $\bar{\rho}'$ denote the Galois representation associated to $E'[p]$. Since $E'$ has split multiplicative reduction, $E'[p]$ is a semistable Galois representation. Therefore $\bar\rho'$ restricted to $I_v$ is represented
  by matrices of the form   $\begin{pmatrix} 
1 & \star \\
0 & 1 
\end{pmatrix}
$.
This implies that $\bar\rho$ is  represented by matrices of the form  $\begin{pmatrix} 
\chi & \star \\
0 & \chi 
\end{pmatrix}
$. Since $\chi $ is a ramified character, we get that $E[p]^{I_v}=0$.

Next suppose that $E$ has additive reduction at $v \not = p$ which is potentially good. In this case it is well-known that  $I_v/I_w^K$ is a cyclic group of order at most $6$. 
Thus if $p\geq 5$ then  $I_v/I_w^K$, has order prime to $p$ and therefore $H^1(I_v/I_w^K,E[p])=0$. Finally,
suppose that   $p=3$  and $E$  has additive and potentially good reduction at $v$. In this case $I_v$ acts on
$E[p]$ non-trivially  (see for example \cite[Corollaries 2(b), 3]{ST}).  
Since $I_v/I_w^K$ is cyclic, the cardinality of $H^1(I_v/I_w^K,E[p])$ and
$H^0(I_v/I_w^K,E[p])=E(\Q_v^{ur})[p]$ are the same. As $I_v$ acts non-trivially on $E[p]$,
the $\mathbb{F}_p$-rank of $E[p]^{I_v}$ is at most one. Therefore if $E[p]^{I_v}$ is non-zero,
the $\mathbb{F}_v$-rank of $H^1(I_v/I_w^K,E[p])$ is  one.  We mention that it is well-known that
$E(\Q_v^{ur})[p]\neq 0$ only when $3$ divides the Tamagawa number of $E$ over $\Q_v(\mu_3)$. 

Next, we suppose that $E$ has multiplicative reduction at $v \not = p$. If $E$ has non-split multiplicative reduction at $v$, then there exists a quadratic unramified  extension of $\Q_v$ over which $E$ has split multiplicative reduction. Thus if $E$ has multiplicative reduction at $v$ then it has split multiplicative reduction over $\Q_v^{ur}$. This implies that $\mu_p\subset E(\Q_v^{ur})[p]$. Thus $E[p]^{I_v}\neq 0$. If $E[p]^{I_v}$ has $\mathbb{F}_p$-rank two then $I_v/I_w^K$ is a trivial group. Thus $H^1(I_v/I_w^K,E[p])$  can be non trivial only when $E(\Q_v^{ur})[p]$ has $\mathbb{F}_p$-rank one. 

Now suppose that $E(\Q_v^{ur})[p]=E[p]^{I_v}$ has $\mathbb{F}_p$-rank one and $E$ has multiplicative reduction at $v$. Since $\mu_p\subset \Q_v^{ur}$, and the determinant of the Galois representation $\bar\rho$ associated to $E[p]$ is the cyclotomic character mod $p$, we get that $\bar\rho$ restricted to $I_v$ is equivalent to the matrices of the form   $\begin{pmatrix} 
1 & \star \\
0 & 1 
\end{pmatrix}
$ for $\star\in \mathbb{F}_p$. Further, since $E[p]^{I_v}\neq E[p]$, $I_v$ acts on $E[p]$-nontrivially. Therefore the image of $\bar\rho$ has order $p$ and it is a $p$-Sylow subgroup of $G=\gal(K/\Q)$. In particular, $I_v/I_w^K$ is a cyclic  group of order $p$. This implies that the $\mathbb{F}_p$-rank of $H^1(I_v/I_w^K,E[p])$ is one. This proves the lemma. 
\end{proof}

\begin{theorem}\label{mainconv}
 Suppose that the following holds
\begin{enumerate}
\item[(a)] $E$ has good  reduction at $p$. 
\item[(b)] If $E$ has ordinary reduction at $p$ and  $a_p(E) \neq 1$ mod $p$ then  that $E[p]$ is wildly ramified at $p$. 
\item[(c)] $c_v(E)$ is a $p$-adic unit for every finite prime $v\neq p$.
\item[(d)] $E[p]$ is an irreducible $\gal(\bar{\QQ}/\QQ)$-representation. 
\end{enumerate}
Then \[ \mathrm{rank}_{\mathbb{F}_p} \Hom_G(\Cl_K/p\Cl_K,E[p])  \leq \mathrm{rank}_\mathbb{\mathbb{F}_p} \Sel_p(E/\Q) +\# \mathfrak{T}.\] 
 \end{theorem}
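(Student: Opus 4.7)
The plan is to realize $\Hom_G(\Cl_K/p\Cl_K, E[p])$ inside $H^1(\Q, E[p])$ via global class field theory and inflation--restriction, then compare the resulting subgroup with $\Sel_p(E/\Q)$, with the discrepancy controlled by $\#\mathfrak{T}$ through Lemma~\ref{nonzero}. First I would use class field theory: a $G = \gal(K/\Q)$-equivariant homomorphism $\Cl_K/p\Cl_K \to E[p]$ corresponds to an everywhere unramified abelian extension $L/K$ with $\gal(L/K) \hookrightarrow E[p]$, and the equivariance makes $L/\Q$ Galois. The associated class lives in $\Hom_G(\Cl_K, E[p]) \subseteq H^1(K, E[p])^G$ (where $G_K$ acts trivially on $E[p]$ as $E[p] \subset K$), and since Lemma~\ref{vanish1} gives $H^i(G, E[p]) = 0$, inflation--restriction makes restriction an isomorphism $H^1(\Q, E[p]) \cong H^1(K, E[p])^G$. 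This produces an injection $\Hom_G(\Cl_K/p\Cl_K, E[p]) \hookrightarrow \mathcal{U}$, where
\[
\mathcal{U} := \{f \in H^1(\Q, E[p]) : f|_{I_w^K} = 0 \text{ for every prime } w \text{ of } K\}.
\]

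Second, I would examine local conditions. For every $v$, normality of $G_K$ in $G_\Q$ forces $I_w^K \trianglelefteq I_v$, and since $I_w^K \subset G_K$ acts trivially on $E[p]$, inflation--restriction yields
\[
0 \to H^1(I_v/I_w^K, E[p]) \to H^1(I_v, E[p]) \to H^1(I_w^K, E[p]),
\]
so for any $f \in \mathcal{U}$ the restriction $f|_{I_v}$ lies (via inflation) in $H^1(I_v/I_w^K, E[p])$. At $v \neq p$ with $v \notin \mathfrak{T}$, Lemma~\ref{nonzero} gives $H^1(I_v/I_w^K, E[p]) = 0$, so $f$ is unramified at $v$; the argument of Lemma~\ref{fin} (that $p \nmid c_v$ makes $H^1(\Q_v, E) \hookrightarrow H^1(\Q_v^{ur}, E)$ injective) then forces the local Selmer condition at $v$. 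At $v \in \mathfrak{T}$, Lemma~\ref{nonzero} bounds $\mathrm{rank}_{\F_p} H^1(I_v/I_w^K, E[p]) \leq 1$.

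Consider the map
\[
\alpha \colon \mathcal{U} \longrightarrow \bigoplus_{v \in \mathfrak{T}} H^1(I_v/I_w^K, E[p]), \qquad f \mapsto (f|_{I_v})_{v \in \mathfrak{T}},
\]
whose image has $\F_p$-rank at most $\#\mathfrak{T}$. For $f \in \ker(\alpha)$ the previous step ensures $f$ is unramified at every $v \neq p$, and the \emph{main obstacle} is to show that $f$ also satisfies the Kummer--Selmer condition at $v = p$, i.e. $f|_{G_p} \in \mathrm{Im}(\kappa_p)$. The approach is to verify, using hypothesis (b) and the structure of $\bar\rho|_{G_p}$ in \eqref{rep}, that $H^1(I_p/I_w^K, E[p]) = 0$, which forces $f$ to be unramified at $p$ as well and hence places $f$ in $R_p(E/\Q) \subseteq \Sel_p(E/\Q)$ by Lemma~\ref{fin}. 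The verification mirrors the case analysis in the proof of Theorem~\ref{main}: in the ordinary case $\bar\rho(I_p)$ lies in the upper Borel of $\GL_2(\F_p)$ with diagonal $(\omega_p, 1)$, and a Hochschild--Serre computation splitting off the wild unipotent subgroup kills cohomology at both stages (the tame cyclic quotient of order $p-1$ acts by the non-trivial character $\omega_p$, and the $H/N$-action on the wild $H^1$ piece is also by a non-trivial character); in the supersingular case $E(\Q_p^{ur})[p] = 0$ and the image of inertia in $\GL_2(\F_p)$ has no fixed vector in $E[p]$, again forcing the cohomology to vanish. Hypothesis (b) is precisely what one needs to rule out the problematic ordinary configuration $a_p \equiv 1 \pmod{p}$ without wild ramification (in the non-CM setting).

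Combining, $\ker(\alpha) \subseteq \Sel_p(E/\Q)$ and $\mathrm{rank}_{\F_p} \im(\alpha) \leq \#\mathfrak{T}$, so
\[
\mathrm{rank}_{\F_p} \Hom_G(\Cl_K/p\Cl_K, E[p]) \;\leq\; \mathrm{rank}_{\F_p} \mathcal{U} \;\leq\; \mathrm{rank}_{\F_p} \Sel_p(E/\Q) + \#\mathfrak{T},
\]
which is the claimed inequality.
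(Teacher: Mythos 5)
Your proposal is correct and follows essentially the same route as the paper: both identify $H^1(\Q,E[p])\cong H^1(K,E[p])^G$ via Lemma~\ref{vanish1}, identify the local defect groups with $H^1(I_v/I_w^K,E[p])$, invoke Lemma~\ref{nonzero} at $v\neq p$ and prove the vanishing of $H^1(I_p/I_w^K,E[p])$ at $p$ by the same ordinary/supersingular case analysis, and finish with $R_p(E/\Q)\subseteq \Sel_p(E/\Q)$ from Lemma~\ref{fin}. The only difference is presentational: you count ranks through an explicit evaluation map $\alpha$ on the set $\mathcal{U}$, where the paper runs the snake lemma on the corresponding commutative diagram.
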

 \begin{proof}
 Consider the commutative diagram 
  {\small
\begin{equation*}
\xymatrix{
0 \ar[r] & \Hom(\Cl_K/p\Cl_K,E[p])^G   \ar[r] &  H^1(K,E[p])^G     \ar[r] & \prod_{v} (\prod_{w|v} H^1(K_w^{ur} ,E[p]) )^{G}     \\ 
0 \ar[r] & R_p(E/\Q) \ar[u]^{\alpha} \ar[r] & H^1(\Q,E[p])  \ar[u]^{\beta}  \ar[r]  & \prod_v H^1(\Q_v^{ur},E[p])  \ar[u]^{\prod\gamma_v}
}
\end{equation*}
}
where $v$ varies over the primes of $\Q$ and $w$ varies over primes of $K$. From Lemma  \ref{vanish1},  $\beta$ is an isomorphism. 

The kernel of $\gamma_v : H^1(\Q_v^{ur},E[p]) \lrta  \prod_{w|v} H^1(K_w^{ur} ,E[p]) )^{G} $ is same as the kernel of $H^1(\Q_p^{ur},E[p]) \lrta   H^1(K_w^{ur} ,E[p]) )$ for fixed prime $w|v$ of $K$, which is equal to $H^1(I_v/I_w^K,E[p])$. 

Let $w|p$ be a prime of $K$. We shall show that $H^1(I_p/I_w^K,E[p])=0$. First we consider the case when $E$ has supersingular reduction at $p$. It is well-known  
that the action of $I_p$ on $E[p]$ factors through a cyclic group  of order prime to $p$, see for example \cite{E},
and also \cite[Theorem 1.2]{BG} for an exposition. As a consequence, we get that $H^1(I_p/I_w^K,E[p])=0$.

Next, consider the case when $E$ has ordinary reduction at $p$. If $\star=0$ in the representation \ref{rep} then $I_p/I_w^K$ has order prime to $p$ and therefore $H^1(I_p/I_w^K,E[p])=0$. Now suppose that $\star\neq 0$. Let $P$ be the unique $p$-sylow subgroup of $H:=I_p/I_w^{K}$. Then it is enough to show that $H^1(P,E[p])^{H/P}=0$. We have an exact sequence of $H$-modules
\[ 0 \lrta A \lrta  E[p] \lrta B \lrta 0 \]
where $P$ acts trivially on   $\mathbb{F}_p$-rank one modules $A$ and $B$.
By considering the corresponding long exact sequence of cohomology groups, it is easy to see that
the natural  map $H^1(P,E[p])\rightarrow H^1(P,B)$ induced by the surjective map $E[p] \rightarrow B$
is in fact an isomorphism. 

 Since $H/P$ acts trivially on $B$ and non trivially on $P$, we get that $H^1(P,B)^{H/P}=0$. Thus $H^1(P,E[p])^{H/P}=0$ and therefore $H^1(H,E[p])=H^1(I_p/I_w^K,E[p])=0$.

 As a consequence of the snake lemma,  the above commutative diagram and Lemma \ref{nonzero},  we have
\[ \mathrm{rank}_{\mathbb{F}_p} \Hom_G(\Cl_K/p\Cl_K,E[p])  \leq \mathrm{rank}_{\mathbb{F}_p} R_p(E/\Q) +\# \mathfrak{T}. \]
We mention that if $\mathfrak{T}$ is an empty set then in fact we have $\mathrm{rank}_{\mathbb{F}_p} \Hom_G(\Cl_K/p\Cl_K,E[p])  = \mathrm{rank}_{\mathbb{F}_p} R_p(E/\Q) $.
From Lemma \ref{fin}, $R_p(E/\Q)\subset \Sel(E/\Q)$ and this proves the theorem. 
\end{proof}
\noindent
{\bf Example 2 : } Consider the following  elliptic curve with Cremona number  $423801ci1$ and conductor $3^{2} \cdot 7^{2} \cdot 31^{2}$ from \cite{LMFDB}. 
$$ E :  y^2 + y = x^{3} - 17034726259173 x - 27061436852750306309   $$
From \cite{LMFDB}, we get that $E[5]$ is an irreducible $\gal(\overline{\Q}/\Q)$-module with the image of associated Galois representation as $\GL_2(\mathbb{F}_5)$.  The curve $E$ has additive reduction at the primes $3$, $7$ and $31$  and has good and ordinary reduction at $5$ with $a_5(E)=4\neq 1$ mod $5$. The Tamagawa numbers are $5$-adic unit at all primes of bad reduction. In this case the set $\mathfrak{T}$ is empty. The Mordell-Weil rank of $E$ is $0$ and $\Sh(E/\Q)$ has cardinality  $625$. We do not know if $\Sh(E/\Q) \cong \Z/25\Z\oplus \Z/25\Z $ or $\Sh(E/\Q) \cong \Z/5\Z\oplus \Z/5\Z \oplus \Z/5\Z \oplus \Z/5\Z $. In any case, as a consequence of  Theorem \ref{main}, we get that $ \mathrm{rank}_{\mathbb{F}_5} \Hom_G(\Cl_K/5\Cl_K,E[5])$ is at least one. Further, since $\mathfrak{T}$ is empty, from  Theorem \ref{mainconv} we get that $R_5(E/\Q)\cong   \Hom_G(\Cl_K/5\Cl_K,E[5])$ and it  is a non-trivial group.   We also mention that being a subgroup of $\Sel_p(E/Q)$, in general  $R_5(E/\Q)$  may be the trivial group even if $\Sel_p(E/\Q)$ is non-trivial. But in this particular example, $R_5(E/\Q)$ is a non trivial group and every element of $ \Hom_G(\Cl_K/5\Cl_K,E[5])$ is in the image of $R_5(E/\Q)$ under the natural restriction map of Galois cohomology.

\vspace{5mm}

\noindent{\bf Acknowledgement:} The authors  must thank Harish-Chandra Research Institute, Allahabad for the hospitality, and
Prof. Kalyan Chakraborty for the conference on Class groups 
at the  Harish-Chandra Research Institute in October 2019, Allahabad where our collaboration on this work took place.
The authors also thank N. Dummigan, H. Katsurada, and  C. Wuthrich for their interest and encouragement.

\end{document}